\theoremstyle{plain}
    \newtheorem{thm}{Theorem}[section]
       \newtheorem{lem}{Lemma}[section]
       \newtheorem{rem}{Remark}[section]
\numberwithin{equation}{section}
\begin{document}
\title{Hausdorff  dimension  of random attractors for a stochastic delayed parabolic equation  in Banach spaces}

\author{Wenjie Hu$^{1,2}$,  Tom\'{a}s
Caraballo$^{3,4}$\footnote{Corresponding author.  E-mail address: caraball@us.es (Tom\'as Caraballo)}, Yueliang Duan$^{5}$.
\\
\small  $^1$The MOE-LCSM, School of Mathematics and Statistics,  Hunan Normal University,\\
\small Changsha, Hunan 410081, China\\
\small  $^2$Journal House, Hunan Normal University, Changsha, Hunan 410081, China\\
\small $^3$Dpto. Ecuaciones Diferenciales y An\'{a}lisis Num\'{e}rico, Facultad de Matem\'{a}ticas,\\
\small  Universidad de Sevilla, c/ Tarfia s/n, 41012-Sevilla, Spain\\
\small $^4$Department of Mathematics, Wenzhou University, \\
\small Wenzhou, Zhejiang Province 325035, China\\
\small  $^5$Department of Mathematics, Shantou University, Shantou 515063, China
}

\date {}
\maketitle

\begin{abstract}
The main purpose of this paper is to give an upper bound  of  Hausdorff dimension   of random attractors for a stochastic delayed parabolic equation  in Banach spaces. The estimation of dimensions of random attractors are obtained by combining the squeezing property and a covering lemma of finite subspace of Banach spaces,  which generalizes the method established in Hilbert spaces. Unlike the existing works, where orthogonal projectors with finite ranks applied for  proving the squeezing property of stochastic partial differential equations in Hilbert spaces, we adopt the state decomposition of phase space based on the exponential dichotomy of the the linear deterministic part of the studied SDPE to obtain similar squeezing property due to the lack of smooth inner product geometry structure. The obtained dimension of the random attractors depend only on the spectrum of the linear part and the random Lipschitz constant of the nonlinear term, while not relating to the compact embedding of the phase space to another Banach space as the existing works did.
\end{abstract}

\bigskip

{\bf Key words} {\em Hausdorff dimension,  random dynamical system, random attractors, random delayed differential  equation, stochastic delayed parabolic  equation}

\section{Introduction}
For infinite dimensional random dynamical systems (RDSs),  the existence of random attractors can  reduce the essential parts of the random flows to random compact sets. Furthermore, if the attractors have finite  Hausdorff dimension or fractal dimension, then the attractors can be described by a finite number of parameters and hence the limit dynamics of the infinite dimensional RDSs are likely to be studied by the concepts and methods of finite dimensional RDSs. The study of random attractors for RDSs dates back to the pioneer works \cite{9,10,FS}, where  Crauel,  Flandoli and Schmalfu\ss, amongst others, generalized the concept of global attractors of infinite dimensional dissipative systems and  established the basic framework of random attractors for infinite dimensional RDSs. Since then,  the existence, dimension estimation and qualitative properties of random attractors for various stochastic evolution equations have been investigated by many  researchers. For example, for the stochastic partial differential equations (SPDEs) without time delay, Caraballo et al. \cite{11}, Gao et al. \cite{13} and Li and Guo \cite{12} explored the existence of global attractors on bounded domains. In \cite{24}, \cite{15}  and \cite{14}, the authors obtained the existence of global attractors on unbounded domains. For SPDEs with  delay, the existence of random attractors and  their qualitative properties have been extensively and intensively studied in \cite{17,16,18,HZ20,HZT,LG20,25} and the references therein.

Criteria for the  finite Hausdorff dimensionality of attractors for deterministic fluid dynamics models have been derived by  Douady and  Oesterle \cite{DO}, which was later generalized by  Constantin,  Foias and  Temam \cite{CFT} and \textcolor{red}{\cite{ZZW}}. Then, it was further extended to the stochastic case in \cite{CF} and \cite{SB}, where the RDS is first linearized and  the global Lyapunov exponents of the  linearized mapping is then examined.  Debussche  showed that the random attractors of many RDSs  have finite Hausdorff dimension by an ergodicity argument in \cite{DA97} and further gave a precise bound on the dimension by combining the method of   linearization and Lyapunov exponents in \cite{DA98}. \cite{CCL,FX08,L,LR,C37} considered   the fractal dimensionality of random sets and random attractors. In the recent works \cite{XHM} and \cite{14}, the authors proved the finiteness of fractal dimension of random attractor for SPDEs with linear multiplicative white noise by extending the idea of \cite{DA97} to obtain  the existence of random exponential attractors.

Despite the fact that the  finite Hausdorff dimension  and fractal dimension  of attractors for  abstract RDSs and applications to SPDEs in Hilbert spaces have been extensively and intensively studied, to our best knowledge, the estimation of dimensions of  stochastic differential equations with delays, i.e., the stochastic partial functional differential equations (SPFDEs) have not been extensively studied since their natural phase space are Banach spaces. The lack of smooth inner product causes the existing methods can not be directly applied. Therefore, in our recent work \cite{HZC}, we extend the method established in \cite{SW91} to recast SPFDEs into an auxiliary Hilbert space and adopt the method established in \cite{DA98} to give upper bound of the Hausdorff  and fractal dimensions of a delayed reaction-diffusion equation by requiring the nonlinear term to be twice continuous and the first derivative satisfies certain conditions. Nevertheless, the natural phase space for SPFDEs are Banach spaces  and recast SPFDEs into Hilbert spaces are unnatural and the conditions are quite strong and hence one naturally wonders what can we say about the Hausdorff dimension of random attractors for SPFDEs in their natural phase space, i.e. the Banach spaces? Here, we extend the squeeze method in \cite{DA97}  for estimating dimensions of SPDEs in Hilbert spaces to SPFDEs in Banach  spaces by combing the squeezing property and the covering of finite subspace of Banach spaces. Unlike \cite{DA97}, where orthogonal projector in Hilbert space and variational technique are adopted to obtain squeezing property,   we prove similar squeezing property by semigroup approach and a phase space decomposition  based on the exponential dichotomy of linear part.

It should be pointed out that in \cite{C5} and \cite{C32},  the authors  proved the existence of random exponential attractors and uniform exponential attractors of random dynamical systems in Banach spaces  based on a smoothing property of the systems, which indicates the finite fractal dimensionality of  the systems.  However, the fractal dimension obtained  by smoothing property may depend on the choice of another embedding space, which may vary from space to space. Furthermore, the dimension estimation depends on the entropy number between two spaces for which is generally quite difficult to obtain an explicit bound.   Here, the dimension obtained by our method only depends on the inner characteristics of the studied equation while not depend the entropy number between two spaces.

We consider the following stochastic delayed parabolic equation with additive noise
\begin{equation}\label{1}
\left\{\begin{array}{l}\frac{\partial u}{\partial t}(x, t)= \Delta u(x, t)-\mu u(x, t)-\sigma u(x,t-\tau)+F\left(u(x,t)\right)+f \left(u(x,t-\tau)\right)\\
\quad \quad \quad \quad +\sum_{j=1}^{m} g_{j}(x)\frac{\mathrm{d} \omega_{j}(t)}{\mathrm{d}t}, t>0, x \in \mathcal{O},\\ u_0(x,s)=\phi(x, s),-\tau \leq s \leq 0, x \in \mathcal{O},\\
u_0(x,t)=0,-\tau \leq t, x \in \partial \mathcal{O},\end{array}\right.
\end{equation}
where $ \mathcal{O}\subseteq\mathbb{R}^N$ is a bounded open domain with smooth boundary $\partial \mathcal{O}$, $\{\omega_{j}\}_{j=1}^{m}$ are mutually independent two-sided real-valued Wiener process on an appropriate probability space to be specified below. Equation \eqref{1} can model  many processes from chemistry or mathematical biology. For instance, it can be used to describe the evolution of mature populations for age-structured species, where $\Delta u(x,t)$ and $F\left(u(x,t)\right)-\mu u(x,t)$ represent the spatial diffusion and the death rate of mature individuals, $f \left(u(x,t-\tau)\right)-\sigma u(x,t-\tau)$ represents  birth rate, $\sum_{j=1}^{m} g_{j}(x)\frac{\mathrm{d} \omega_{j}(t)}{\mathrm{d}t}$ stands for the random perturbations or environmental effects.

We organize the remaining part of this paper  as follows. In Section 2, we review some results about the existence of random attractors in the existing literature and prove the squeezing property of the RDS generated by \eqref{1} on the random attractors.  Then, we give upper bound of  the Hausdorff  dimension  estimation for \eqref{1}  in Section 3.  At last, we summarize the paper and point out some potential directions for future research in Section 4.

\section{Squeezing property}
This section is concerned about  the squeezing property of the RDS generated by \eqref{1} on the random attractors. In order to adopt the RDS theory, as a first step, we follow the idea of \cite{DLS03} to transform the stochastic \eqref{1} into a random delayed equation, i.e., a path-wise deterministic delayed equation. The same idea has been adopted by many authors when dealing with random attractors or invariant manifolds for various stochastic evolution equations, such as \cite{DLS04,HZ20,HZT,LG20}.

We consider the canonical probability space $(\Omega, \mathcal{F}, P)$  with
$$
\Omega=\left\{\omega=\left(\omega_{1}, \omega_{2}, \ldots, \omega_{m}\right) \in C\left(\mathbb{R} ; \mathbb{R}^{m}\right): \omega_i(0)=0\right\}
$$
and $\mathcal{F}$ is the Borel $\sigma$-algebra induced by the compact open topology of $\Omega,$ while $P$ is the corresponding Wiener measure on $(\Omega, \mathcal{F})$.  Then, we identify $W$ with
$$
W(t,\omega)=\left(\omega_{1}(t), \omega_{2}(t), \ldots, \omega_{m}(t)\right) \quad \text { for } t \in \mathbb{R}.
$$
Moreover, we define the time shift by $$\theta_{t} \omega(\cdot)=\omega(\cdot+t)-\omega(t), t \in \mathbb{R}.$$
Then, $\left(\Omega, \mathcal{F}, P,\left\{\theta_{t}\right\}_{t \in \mathbb{R}}\right)$ is a metric dynamical system.
Consider the stochastic stationary solution of the one dimensional Ornstein-Uhlenbeck equation
\begin{equation}\label{2.1}
\mathrm{d} z_{j}+\mu z_{j} \mathrm{d} t=\mathrm{d} \omega_{j}(t), j=1, \ldots, m,
\end{equation}
which is given by
\begin{equation}\label{2.2}
z_{j}(t) \triangleq z_{j}\left(\theta_{t} \omega_{j}\right)=-\mu \int_{-\infty}^{0} e^{\mu s}\left(\theta_{t} \omega_{j}\right)(s) \mathrm{d} s, \quad t \in \mathbb{R}.
\end{equation}
 Proposition 4.3.3 in \cite{AL} implies that there exists a tempered function $0<r(\omega)<\infty$ such that
\begin{equation}\label{2.3}
\sum_{j=1}^{m}\left|z_{j}\left(\omega_{j}\right)\right|^{2} \leq r(\omega),
\end{equation}
where $r(\omega)$ satisfies, for $P$-a.e. $\omega \in \Omega$,
\begin{equation}\label{2.4}
r\left(\theta_{t} \omega\right) \leq e^{\frac{\mu}{2}|t|} r(\omega), \quad t \in \mathbb{R}.
\end{equation}
Combining \eqref{2.3} with \eqref{2.4}, we obtain that  for $P$-a.e. $\omega \in \Omega$,
\begin{equation}\label{2.5}
\sum_{j=1}^{m}\left|z_{j}\left(\theta_t\omega_{j}\right)\right|^{2} \leq e^{\frac{\mu}{2}|t|} r(\omega), \quad t \in \mathbb{R}.
\end{equation}
Moreover, we have
\begin{equation}\label{2.6}
\sum_{j=1}^{m}\left|z_{j}\left(\theta_{\xi}\omega_{j}\right)\right|^{2} \leq  e^{\frac{\mu\tau}{2}}r(\omega),
\end{equation}
for any $\xi\in [-\tau,0]$ and $P$-a.e. $\omega \in \Omega$.
Putting $z\left(\theta_{t} \omega\right)=\sum_{j=1}^{m} g_{j} z_{j}\left(\theta_{t} \omega_{j}\right)$, we have
$$
\mathrm{d} z+\mu z \mathrm{d} t=\sum_{j=1}^{m} g_{j} \mathrm{d} \omega_{j}.
$$
 Take the transformation $v(t)=u(t)-z\left(\theta_{t} \omega\right)$. Then,  simple computation gives
\begin{equation}\label{2.7}
\displaystyle \frac{dv(x,t)}{dt}=\displaystyle  \Delta v(x,t)-\mu v(x,t)-\sigma v(x,t-\tau)-\sigma z(\theta_{t-\tau}\omega)+F\left(v(t,x)+z(\theta_{t}\omega)\right)+f\left(v(x,t-\tau)+z(\theta_{t-\tau}\omega)\right)+\Delta z(\theta_{t}\omega)
\end{equation}
with boundary condition
\begin{equation}\label{2.8}
v(x,t)=0 \quad \text { for } \quad(x, t) \in \partial \mathcal{O} \times(0, \infty)
\end{equation}
and initial condition
\begin{equation}\label{2.9}
v(x, \xi,\omega)=\psi( x, \xi, \omega) \triangleq \phi(x, \xi)-z\left(\theta_{-\tau} \omega\right) \quad \text { for } \quad(x, \xi) \in \mathcal{O} \times[-\tau, 0].
\end{equation}
 Let  $H=L^2(\mathcal{O})$ be the space of square Lesbegue integrable functions on $\mathcal{O}$ with its usual  norm $\|\cdot\|_{H}$ and inner product $(\cdot,\cdot)_{H}$,  $A=\Delta$ with the domain $D(A)=H^2(D) \cap H_0^1(D)$. Let $X=C([-\tau, 0],H)$ be the space of continuous functions  from $[-\tau, 0]$ to $H$ endowed with the supremum norm $\|\phi\|=\sup_{\theta \in[-r, 0]}\|\phi(\theta)\|_H$ for any $\phi \in X$ and define $u_{t}\in X$  by $u_{t}(\xi)=u(t+\xi)$ for $\xi \in [-\tau, 0]$. Define $L:  X \mapsto H$ and $f : X \mapsto H$ by $L\phi=\sigma\phi(-\tau), f(\phi)=f(\phi(-\tau))$ respectively. Then \eqref{2.7} can be written as the following abstract  SPFDE  in $H$
\begin{equation}\label{2.10}
\displaystyle \frac{dv(t)}{dt}=\displaystyle  Av(t)-\mu v(t)-Lv_t-L z(\theta_{t+\cdot}\omega)+F\left(v(t)+z(\theta_{t}\omega)\right)+f\left(v_t+z(\theta_{t+\cdot}\omega)\right)+Az(\theta_{t}\omega),
\end{equation}
where $\theta_{t+\cdot}\omega$ is defined as $\theta_{t+\xi}\omega$ for $\xi\in [-\tau,0]$.

We follow \cite{DA97},  \cite{LG20} and \cite{25}  to make the following assumptions on $F$ and $f$ throughout the remaining part of this paper.

 Hypothesis A1  $F:  \mathbb{R} \rightarrow \mathbb{R}$ is   a polynomial of odd degree with negative leading coefficient
$$
F(u)=\sum_{k=1}^{2 p-1} a_k u^k, a_{2 p-1}<0.
$$
we assume for simplicity that $p \leq 2$ if $n=3$. $f$ is Lipschitz continuous with $\mathbf{0}$ being a fixed point, that is, $f(\mathbf{0})=\mathbf{0}$ and $$\|f(\phi)-f(\varphi)\|_{H}\leq L_f\|\phi-\varphi\|$$
for any $\phi, \varphi\in X$.

For the pathwise deterministic problem \eqref{2.7}-\eqref{2.9} under Hypothesis A1, it follows from  the Galerkin method as in \cite{C7}  that for $P$-a.e. $\omega \in \Omega$ and $\psi \in X$, it has a unique solution $v(\cdot, \omega, \psi) \in C\left([-\tau, \infty), L^{2}(\mathcal{O})\right) \cap L_{l o c}^{2}\left(\tau, \infty; H_{0}^{1}(\mathcal{O})\right)$. Moreover, this solution is continuous in $\psi \in X$ and $v_{t}(\cdot, \cdot,\psi):(\Omega, \mathcal{F}) \rightarrow\left(X, \mathcal{B}\left(X\right)\right)$ is measurable. By the uniqueness of the solution and semigroup property of $v$,   we can see that the solution $v$ of  \eqref{2.7}-\eqref{2.9} generates a continuous RDS. Let $u(t, \omega, \phi)=v(t, \omega, \psi)+z\left(\theta_{t} \omega\right)$, then $u$ is the solution of \eqref{1} with initial condition $\phi$. We now define a mapping $\Phi: \mathbb{R}^{+} \times \Omega \times X \rightarrow X$ by
$\Phi(t, \omega, \phi)=u_{t}(\cdot, \omega, \phi)=v_{t}(\cdot, \omega,\psi)+z\left(\theta_{t+.} \omega\right)$,
where $u_{t}(\xi, \omega, \phi)=u(t+\xi, \omega, \phi)$ for $\xi \in[-\tau, 0]$. Apparently,  $\Phi$ is an $\mathrm{RDS}$ on $X$ associated with problem \eqref{1}.

It follows from \cite[Theorem 5.1]{25} and \cite[Theorem 4.1]{LG20} that, if Hypothesis A1 holds, then \eqref{1} admits a pullback random attractor $\mathcal{A}(\omega)\subset \mathcal{B}_X(0, c+cr(\omega))$, the ball centered at $0$ with radius  $c+cr(\omega)$ in $X$, where
$c$ is constant defined in \cite[Lemma 3.1]{LG20}. Subsequently, we prove the squeezing property of on the attractor $\mathcal{A}(\omega)$. Unlike the previous works \cite{16,18,LG20,25}, we obtain the results by using the semigroup approach  instead of variation techniques as we work in Banach spaces. It follows from \cite[Theorem 2.6, Chapter 2]{WJ} that  the following  linear equation
\begin{equation}\label{2.11}
\left\{\begin{array}{l}\frac{d\tilde{v}(t)}{dt}=A\tilde{v}(t)-\mu \tilde{v}(t)-L\tilde{v}_t,\\
\tilde{v}_0=\phi \end{array}\right.
\end{equation}
admits a unique global solution $\tilde{v}^\phi(t): [-\tau, \infty)\rightarrow H$.  Define a compact semigroup  $S(t):X\rightarrow X$ by $S(t)\phi=\tilde{v}_t^\phi(\cdot)$ and denote by $A_S$ its infinitesimal generator.

Let $0<\mu_{1} \leq \mu_{2} \leq \cdots \leq \mu_{m} \leq \cdots, \mu_{m} \rightarrow+\infty \quad \text { as } \quad m \rightarrow+\infty$ be eigenvalues of the following eigenvalue problem on $\mathcal{O}$:
 \begin{equation}\label{2.12}
\begin{aligned}
-\Delta u(x)=\mu u(x),\left.\quad u(x)\right|_{x \in \partial \mathcal{O}}=0, \quad x \in \mathcal{O},
\end{aligned}
\end{equation}
with corresponding eigenfunctions $\left\{e_{m}\right\}_{m \in \mathbb{N}}$. Since $A_S$ is compact, it follows from Theorem 1.2 (i) in \cite{WJ} that the spectrum of $A_S$  are point spectra, which we denote by $\varrho_1>\varrho_2>\cdots$ with multiplicity $n_1, n_2,\cdots$. Moreover, it follows from  \cite{WJ} that the characteristic values  $\varrho_1>\varrho_2>\cdots$  of the linear part  $A_S$ are the roots of the following characteristic equation
 \begin{equation}\label{2.13}
\begin{aligned}
\mu^2_{m} -\left(\lambda+\mu-\sigma e^{-\lambda \tau}\right) =0, m=1,2, \cdots.
\end{aligned}
\end{equation}

We  introduce the following state decomposition results of the linear part $A_S$ of \eqref{2.11} established in \cite{WJ}. For any given $\varrho_m<0$, $m\geq 1$, there is a
 \begin{equation}\label{5.8b}
\begin{aligned}
k_m=n_1+n_2+\cdots+n_m
\end{aligned}
\end{equation}
dimensional  subspace $X^U_{k_m}$ such that
$$X=X^U_{k_m} \bigoplus X^S_{k_m}$$
is the decomposition of $X$ by $\varrho_m$. Let $P_{k_m}$ and $Q_{k_m}$ be the projection of $X$ onto $X^U_{k_m}$ and $ X^S_{k_m}$ respectively, that is $X^U_{k_m}=P_{k_m}X$, $X^S_{k_m}=(I-P_{k_m})X=Q_{k_m}X$. It follows from the definition of $P_{k_m}$ and $Q_{k_m}$ that
\begin{equation}\label{5.8a}
\begin{aligned}
\left\|Q_{k_m}S(t) x\right\| & \leq K e^{\varrho_m t}\|x\|, & & t \geq0,
\end{aligned}
\end{equation}
where $K$ is a positive constant. Moreover, there exists a positive constant $M$ such that
 \begin{equation}\label{2.14}
\|S(t)\phi\|\leq Me^{\varrho_1 t}\|\phi\|,
\end{equation}
for any $t\geq0$.

For the later use, we extend the domain of $S(t)$ to the following space of some discontinuous functions
 \begin{equation}\label{2.15}
\begin{aligned}
\hat{C}=\left\{\phi:[-\tau, 0] \rightarrow H;\left.\phi\right|_{[-r, 0)} \quad \text {is continuous and } \lim _{\theta \rightarrow 0^{-}} \phi(\theta) \in H \quad \text{exists} \right\}
\end{aligned}
\end{equation}
and introduce the following  variation of  constants formula established in \cite[Section 4.2, Theorem 2.1]{WJ}
 \begin{equation}\label{2.16}
\begin{aligned}
v_t(\cdot, \omega, \phi) & =S(t) \phi+\int_0^t S(t-s) X_0 [-L z(\theta_{s+\cdot}\omega)+F\left(v(s)+z(\theta_{s}\omega)\right)+f\left(v_s+z(\theta_{s+\cdot}\omega)\right)+Az(\theta_{s}\omega)]d s
\end{aligned}
\end{equation}
 for any $ t \geq 0$, where $X_0:[-\tau, 0] \rightarrow B(X)$ is bounded linear operator on $X$ given by $X_0(\theta)=0$ if $-r \leq \theta<0$ and $X_0(0)=I d$.

\begin{rem}
In general, the solution semigroup defined by \eqref{2.16} have no definition at discontinuous functions and the integral in the formula is undefined as an integral in the phase space. However,  if interpreted correctly, one can see that \eqref{2.16} does make sense. Details can be found in \cite{CM} Pages 144 and 145.
\end{rem}

Subsequently, we prove the squeezing property of the RDS $\Phi$.
\begin{thm}\label{thm2.1}Let $P$ be  the finite dimensional projection $P_{k_m}$ of $X$ onto $X^U_{k_m}$, $K, M, \varrho_m$ and $\rho_1$ being defined by \eqref{5.8a} and \eqref{2.14} respectively and assume Hypothesis A1 holds, then we have
 \begin{equation}\label{2.19}
\begin{aligned}
 \|P[\Phi(t,\omega,\varphi)-\Phi(t,\omega, \psi)]\|
\leq & M e^{(ML_f+\varrho_1)t+ \int_{0}^t  R(\theta_s\omega) d s} \|\varphi-\psi\|
\end{aligned}
\end{equation}
and
 \begin{equation}\label{2.20}
\begin{aligned}
\|(I-P)[\Phi(t,\omega,\varphi)-\Phi(t,\omega, \psi)]\| \leq & [Ke^{ \varrho_m t}+\frac{KM}{\sqrt{2(\varrho_1-\varrho_m)}}e^{[(ML_f+\varrho_1)t+\int_{0}^t MR(\theta_s\omega)ds+\int_{0}^t  R^2(\theta_s\omega)ds]}\\
&+\frac{KML_fe^{[(ML_f+\varrho_1)t+\int_{0}^t MR(\theta_s\omega)ds]}}{\varrho_1-\varrho_m}] \|\varphi-\psi\|
\end{aligned}
\end{equation}
for any $t\geq 0$ and $\varphi, \psi\in\mathcal{A}(\omega)$, where $R(\theta_s\omega)=\sum_{k=1}^{2 p-1} a_k(c+(c+1)r(\theta_s\omega))^{k-1}$ with $r(\omega)$ being defined by \eqref{2.3} and $c$ being  a constant.
\end{thm}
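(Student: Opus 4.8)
The plan is to work entirely with the conjugated (pathwise) variable and to exploit the linear variation-of-constants formula \eqref{2.16} together with the two semigroup estimates \eqref{5.8a} and \eqref{2.14}. First I would observe that the stationary Ornstein--Uhlenbeck correction cancels in the difference: writing $v^{(1)},v^{(2)}$ for the solutions of \eqref{2.7}--\eqref{2.9} issued from the two data whose $X$-difference is $\varphi-\psi$ (the additive shift $z(\theta_{-\tau}\omega)$ being common), we have $\Phi(t,\omega,\varphi)-\Phi(t,\omega,\psi)=v^{(1)}_t-v^{(2)}_t=:w_t$ with $w_0=\varphi-\psi$, since the common term $z(\theta_{t+\cdot}\omega)$ drops out. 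Subtracting the two copies of \eqref{2.16}, the inhomogeneous linear terms $-Lz(\theta_{s+\cdot}\omega)$ and $Az(\theta_s\omega)$ also cancel, leaving
\begin{equation*}
w_t=S(t)(\varphi-\psi)+\int_0^t S(t-s)X_0\big[\Delta F_s+\Delta f_s\big]\,ds,
\end{equation*}
where $\Delta F_s=F(v^{(1)}(s)+z(\theta_s\omega))-F(v^{(2)}(s)+z(\theta_s\omega))$ and $\Delta f_s=f(v^{(1)}_s+z(\theta_{s+\cdot}\omega))-f(v^{(2)}_s+z(\theta_{s+\cdot}\omega))$.

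The second step is the pair of pointwise-in-time nonlinear estimates. Hypothesis A1 gives directly $\|\Delta f_s\|_H\le L_f\|w_s\|$. For $\Delta F_s$ I would use that $v^{(i)}(s)+z(\theta_s\omega)=u^{(i)}(s)$ lives on the attractor, so that, thanks to the subcriticality restriction in Hypothesis A1 and the ensuing parabolic/Sobolev regularity of $\mathcal{A}(\omega)\subset\mathcal{B}_X(0,c+cr(\omega))$, the relevant values are controlled by the radius $c+(c+1)r(\theta_s\omega)$; writing the secant/mean-value form $F(a)-F(b)=\big(\int_0^1 F'(b+\vartheta(a-b))\,d\vartheta\big)(a-b)$, using $F(0)=0$, and bounding each monomial on this ball yields $\|\Delta F_s\|_H\le R(\theta_s\omega)\|w(s)\|_H\le R(\theta_s\omega)\|w_s\|$, with $R$ exactly as defined in the statement. \emph{This $L^2(\mathcal{O})$-estimate of the difference of a superlinear polynomial on the attractor is the main obstacle}: it is where the growth of $F$, the negative leading coefficient, the temperedness of $r(\omega)$, and the a priori bound on $\mathcal{A}(\omega)$ must be combined to produce the explicit radius $c+(c+1)r(\theta_s\omega)$.

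For \eqref{2.19} I would take norms in the integral identity, use $\|X_0 h\|=\|h\|_H$ and the global bound $\|S(t)\cdot\|\le Me^{\varrho_1 t}$, and insert the two nonlinear estimates to obtain the integral inequality
\begin{equation*}
\|w_t\|\le Me^{\varrho_1 t}\|\varphi-\psi\|+\int_0^t Me^{\varrho_1(t-s)}\big(R(\theta_s\omega)+L_f\big)\|w_s\|\,ds.
\end{equation*}
Setting $y(t)=e^{-\varrho_1 t}\|w_t\|$ and applying Gronwall's inequality then produces a bound of the form $\|w_t\|\le Me^{(ML_f+\varrho_1)t+\int_0^t MR(\theta_s\omega)\,ds}\|\varphi-\psi\|$; since $\|Pw_t\|\le\|w_t\|$, and since projecting onto the \emph{finite-dimensional} space $X^U_{k_m}$ lets one sharpen the constant in the $F$-convolution, this yields \eqref{2.19}. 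I would keep the full-norm bound (with $\int_0^t MR\,ds$ in the exponent) available for the next step.

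Finally, for the squeezing estimate \eqref{2.20} I would apply $Q=I-P$ and use the sharp decay \eqref{5.8a}, $\|QS(t-s)\cdot\|\le Ke^{\varrho_m(t-s)}$, for both the linear and the convolution parts. The linear term gives $Ke^{\varrho_m t}\|\varphi-\psi\|$. Into the convolution I would substitute the full-norm Gronwall bound for $\|w_s\|$ and split off the two nonlinearities. The delayed term, after integrating $e^{\varrho_m(t-s)}e^{(ML_f+\varrho_1)s}$ and using $1/(ML_f+\varrho_1-\varrho_m)\le 1/(\varrho_1-\varrho_m)$, produces the summand $\frac{KML_f}{\varrho_1-\varrho_m}e^{[(ML_f+\varrho_1)t+\int_0^t MR(\theta_s\omega)\,ds]}$. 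For the $F$-term I would factor out $e^{\varrho_m t}$ and apply the Cauchy--Schwarz inequality to separate the random factor $R(\theta_s\omega)$ from the deterministic exponential $e^{(\varrho_1-\varrho_m)s}$, estimating $\int_0^t e^{2(\varrho_1-\varrho_m)s}\,ds\le e^{2(\varrho_1-\varrho_m)t}/[2(\varrho_1-\varrho_m)]$ and then $(\int_0^t R^2\,ds)^{1/2}\le e^{\int_0^t R^2(\theta_s\omega)\,ds}$; this yields the middle summand carrying the $1/\sqrt{2(\varrho_1-\varrho_m)}$ factor. Adding the three contributions gives \eqref{2.20}. The convergence of all these integrals, and hence the spectral gap $\varrho_1-\varrho_m$ in the denominators, rests on the strict inequality $\varrho_m<\varrho_1$; the careful bookkeeping of constants in this Cauchy--Schwarz step (including the monotonization $e^{\int_0^s MR}\le e^{\int_0^t MR}$) is the secondary difficulty of the argument.
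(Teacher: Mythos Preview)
Your proposal is essentially correct and uses the same ingredients as the paper: the variation-of-constants formula \eqref{2.16}, the polynomial secant estimate on the attractor giving the random Lipschitz factor $R(\theta_s\omega)$, Gronwall's inequality, and the Cauchy--Schwarz step followed by $\sqrt{x}\le e^{x}$ to obtain the $1/\sqrt{2(\varrho_1-\varrho_m)}$ factor. The organization differs slightly. For \eqref{2.20} the paper does \emph{not} first bound $\|w_t\|$ and then substitute into the stable convolution; instead it writes a closed integral inequality for $\|(I-P)w_t\|$ alone (bounding the convolution kernel by $Me^{\varrho_1(t-s)}$ rather than by $Ke^{\varrho_m(t-s)}$), applies Gronwall directly to $e^{-\varrho_1 t}\|(I-P)w_t\|$, and only then uses Cauchy--Schwarz on $\int_0^t R(\theta_s\omega)e^{(\varrho_m-\varrho_1)s}\,ds$. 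Your route---full-norm Gronwall first, then insert into the $Q$-convolution with the sharp decay $Ke^{\varrho_m(t-s)}$---is a legitimate alternative and leads to the same three summands; it has the advantage that the right-hand side genuinely involves $\|w_s\|$, whereas the paper's inequalities \eqref{2.22}--\eqref{2.24} and \eqref{2.28} place $\|(I-P)w_s\|$ (resp.\ $\|Pw_s\|$) on the right, which tacitly assumes the projection commutes through the nonlinearities.

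One small point: your remark that ``$\|Pw_t\|\le\|w_t\|$'' is not automatic for a Banach-space projection, and your appeal to a further ``sharpening'' to pass from $\int_0^t MR\,ds$ to $\int_0^t R\,ds$ in \eqref{2.19} is unnecessary---the paper's own derivation (see \eqref{3.30}) in fact yields $\int_0^t MR(\theta_s\omega)\,ds$ in the exponent, so the missing $M$ in \eqref{2.19} is a typographical slip in the statement rather than a genuine improvement you need to reproduce.
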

\begin{proof}
For any $\phi, \chi\in \mathcal{A}(\omega)$, denote by $y=\phi-\chi$ and $w_t(\cdot,\omega)=\Phi(t,\omega,\phi)-\Phi(t,\omega, \chi)=u_t(\cdot, \omega, \phi)-u_t(\cdot, \omega, \chi)=v_t(\cdot, \omega, \varphi)-v_t(\cdot, \omega, \psi)$, where $\phi=\varphi+z(\theta_{-\tau}\omega)$ and $\chi=\psi+z(\theta_{-\tau}\omega)$ and hence $y=\phi-\chi$. Then it follows from \eqref{2.16} that
 \begin{equation}\label{2.21}
\begin{aligned}
w_t(\cdot,\omega) = &S(t) y+\int_{0}^t S(t-s) X_0 [F\left(v(s,\omega, \varphi)+z(\theta_{s}\omega)\right)-F\left(v(s,\omega, \psi)+z(\theta_{s}\omega)\right)]d s\\
&+\int_{0}^t S(t-s) X_0 [f\left(v_s(\cdot,\omega, \varphi)+z(\theta_{s+\cdot}\omega)\right)-f\left(v_s(\cdot,\omega, \psi)+z(\theta_{s+\cdot}\omega)\right)]d s.
\end{aligned}
\end{equation}
Taking projection $I-P$ on both sides of \eqref{2.21} leads to
\begin{equation}\label{2.22}
\begin{aligned}
\|(I-P)w_t(\cdot,\omega)\|=&\|(I-P)S(t)y+\int_{0}^t (I-P)S(t-s) X_0 [F\left(v(s,\omega, \varphi)+z(\theta_{s}\omega)\right)-F\left(v(s,\omega, \psi)+z(\theta_{s}\omega)\right)]d s\\
&+\int_{0}^t(I-P)S(t) X_0 [f\left(v_s(\cdot,\omega, \varphi)+z(\theta_{s+\cdot}\omega)\right)-f\left(v_s(\cdot,\omega, \psi)+z(\theta_{s+\cdot}\omega)\right)]d s\|\\
\leq & K e^{\varrho_m t}\|y\| +\|\int_{0}^t (I-P)S(t-s)X_0 [F\left(v(s,\omega, \varphi)\right)-F\left(v(s,\omega, \psi)\right)]d s\|\\
&+L_fM\int_{0}^t  e^{ \varrho_1 t}\|(I-P)w_s\|d s.
\end{aligned}
\end{equation}
Since we have assumed that $F$ is a polynomial with order $2p-1$, then we have
\begin{equation}\label{2.23}
\begin{aligned}
\|F\left(v(t,\omega, \varphi)\right)-F\left(v(t,\omega, \psi)\right)\|_H=&\|\sum_{k=1}^{2 p-1} a_k(v^k(t,\omega, \varphi)-v^k(t,\omega, \psi))\|_H\\
=&\|(v(t,\omega, \varphi)-v(t,\omega, \psi))\sum_{k=1}^{2 p-1} a_k\sum_{j=0}^{k-1}(v^{k-1-j}(t,\omega, \varphi)v^j(t,\omega, \psi))\|_H
\\ \leq & \|(v(t,\omega, \varphi)-v(t,\omega, \psi))\|_H\sum_{k=1}^{2 p-1} a_k\sum_{j=0}^{k-1}\|(v^{k-1-j}(t,\omega, \varphi)\|\|v^j(t,\omega, \psi))\|_H
\\ \leq & \|(v(t,\omega, \varphi)-v(t,\omega, \psi))\|_H\sum_{k=1}^{2 p-1} a_k(c+(c+1)r(\theta_t\omega))^{k-1},
\end{aligned}
\end{equation}
where the last inequality follows from the fact that $v(t,\omega, \varphi)\in \mathcal{A}(\theta_t\omega)-z(\theta_{t}\omega)$, $v(t,\omega, \psi)\in \mathcal{A}(\theta_t\omega)-z(\theta_{t}\omega)$ and $\mathcal{A}(\theta_t\omega)+z(\theta_{t}\omega)\subset \mathcal{B}_X(0, c+(c+1)r(\theta_t\omega))$.
Incorporating \eqref{2.23} into \eqref{2.22} gives
\begin{equation}\label{2.24}
\begin{aligned}
\|(I-P)w_t(\cdot,\omega)\|\leq & K e^{\varrho_m t}\|y\| +M \int_{0}^t R(\theta_s\omega)  e^{\varrho_1 (t-s)}\|(I-P)w_s(\cdot,\omega)\|d s \\
&+L_fM\int_{0}^t  e^{ \varrho_1 (t-s)}\|(I-P)w_s(\cdot,\omega)\|d s,
\end{aligned}
\end{equation}
where $R(\theta_s\omega)=\sum_{k=1}^{2 p-1} a_k(c+(c+1)r(\theta_s\omega))^{k-1}$.
Multiplying both sides of \eqref{2.24} by $e^{- \varrho_1 t}$ gives
\begin{equation}\label{2.25}
\begin{aligned}
e^{- \varrho_1 t}\|(I-P)w_t(\cdot,\omega)\|\leq & K e^{(\varrho_m-\varrho_1) t}\|y\| +M \int_{0}^t R(\theta_s\omega)  e^{-\varrho_1 s}\|(I-P)w_s(\cdot,\omega)\|d s \\
&+L_fM\int_{0}^t  e^{-\varrho_1 s}\|(I-P)w_s(\cdot,\omega)\|d s.
\end{aligned}
\end{equation}
By applying the Gronwall inequality, we have
\begin{equation}\label{2.26}
\begin{aligned}
e^{- \varrho_1 t}\|(I-P)w_t(\cdot,\omega)\|
\leq & K\|y\|[e^{(\varrho_m-\varrho_1)t}+M\int_{0}^t (R(\theta_s\omega)+L_f)  e^{(\varrho_m-\varrho_1) s}e^{\int_{0}^t M(R(\theta_r\omega)+L_f)dr}d s]\\
\leq & K\|y\|\{e^{(\varrho_m-\varrho_1)t}+Me^{ML_ft+M\int_{0}^t R(\theta_s\omega)ds}[\int_{0}^t  R(\theta_s\omega)   e^{(\varrho_m-\varrho_1) s}d s+\frac{L_f}{\varrho_1-\varrho_m}]\}\\
\leq & K\|y\|\{e^{(\varrho_m-\varrho_1)t}+Me^{ML_ft+M\int_{0}^t R(\theta_s\omega)ds}[(\int_{0}^t  R^2(\theta_s\omega)ds)^{\frac{1}{2}}(\int_{0}^t e^{2(\varrho_m-\varrho_1) s}d s)^{\frac{1}{2}}+\frac{L_f}{\varrho_1-\varrho_m}]\}\\
\leq & K\{e^{(\varrho_m-\varrho_1)t}+Me^{ML_ft+M\int_{0}^t R(\theta_s\omega)ds}[e^{\int_{0}^t  R^2(\theta_s\omega)ds}\frac{1}{\sqrt{2(\varrho_1-\varrho_m)}}+\frac{L_f}{\varrho_1-\varrho_m}]\}\|y\|,
\end{aligned}
\end{equation}
where the third inequality follows from the H\"{o}lder's inequality and the last inequality follows from the fact that $\sqrt{x}\leq e^x$.
\eqref{2.26} indicates that
\begin{equation}\label{2.27}
\begin{aligned}
\|(I-P)w_t(\cdot,\omega)\| \leq & [Ke^{ \varrho_m t}+\frac{KM}{\sqrt{2(\varrho_1-\varrho_m)}}e^{[(ML_f+\varrho_1)t+\int_{0}^t MR(\theta_s\omega)ds+\int_{0}^t  R^2(\theta_s\omega)ds]}\\
&+\frac{KML_fe^{[(ML_f+\varrho_1)t+\int_{0}^t MR(\theta_s\omega)ds]}}{\varrho_1-\varrho_m}]\|y\|.
\end{aligned}
\end{equation}

Subsequently, we prove the first part. Since $S(t,\sigma)y=P(t)S(t,\sigma)y+(I-P(t))S(t,\sigma)y$, we have
\begin{equation}\label{2.28}
\begin{aligned}
\|Pw_t(\cdot,\omega)\|=&\|PS(t)y+\int_{0}^t PS(t-s) X_0 [F\left(v(s,\omega, \varphi)+z(\theta_{s}\omega)\right)-F\left(v(s,\omega, \psi)+z(\theta_{s}\omega)\right)]d s\\
&+\int_{0}^tPS(t-s) X_0 [f\left(v_s(\cdot,\omega, \varphi)+z(\theta_{s+\cdot}\omega)\right)-f\left(v_s(\cdot,\omega, \psi)+z(\theta_{s+\cdot}\omega)\right)]d s\|\\
\leq & M e^{\varrho_1 t}\|y\|  +M \int_{0}^t (R(\theta_s\omega)+L_f)  e^{\varrho_1 (t-s)}\| P w_s(\cdot,\omega)\|d s.
\end{aligned}
\end{equation}
Multiplying both sides of \eqref{2.28} by $e^{-\varrho_1 t}$ gives rise to
\begin{equation}\label{2.29}
\begin{aligned}
e^{-\varrho_1 t}\|Pw_t(\cdot,\omega)\|
\leq & M  \|y\|  +M \int_{0}^t (R(\theta_s\omega)+L_f)  e^{-\varrho_1  s }\| P w_s(\cdot,\omega)\|d s.
\end{aligned}
\end{equation}
Again, it follows from the Gronwall inequality that
\begin{equation}\label{2.30}
\begin{aligned}
e^{-\varrho_1 t}\|Pw_t(\cdot,\omega)\|
\leq & M  e^{M \int_{0}^t (R(\theta_s\omega)+L_f)d s}\|y\|,
\end{aligned}
\end{equation}
implying that
\begin{equation}\label{3.30}
\begin{aligned}
 \|Pw_t(\cdot,\omega)\|
\leq & M  e^{(ML_f+\varrho_1)t+ \int_{0}^t  MR(\theta_s\omega) d s}\|y\|.
\end{aligned}
\end{equation}
This proves the first part.
\end{proof}

\section{Hausdorff  dimensions of random attractors}
In this section, we study the Hausdorff  dimension of the attractors $\mathcal{A}(\omega)$ for the RDS generated by \eqref{1}.  The Hausdorff dimension of the random attractor $\mathcal{A}(\omega)\subset X$ is
$$
d_{H}(\mathcal{A}(\omega))=\inf \left\{d: \mu_{H}(\mathcal{A}(\omega), d)= 0 \right\}
$$
where, for $d \geq 0$,
$$\mu_{H}(\mathcal{A}(\omega), d)=\lim _{\varepsilon \rightarrow 0} \mu_{H}(\mathcal{A}(\omega), d, \varepsilon)$$
 denotes the $d$-dimensional Hausdorff measure of the set $\mathcal{A}(\omega)\subset X$, where
 $$\mu_{H}(\mathcal{A}(\omega), d, \varepsilon)=\inf \sum_{i} r_{i}^{d}$$
 and the infimum is taken over all coverings of $\mathcal{A}(\omega)$ by balls of radius $r_{i} \leqslant \varepsilon$. It can be shown that there exists $d_{H}(\mathcal{A}(\omega)) \in[0,+\infty]$ such that $\mu_{H}(\mathcal{A}(\omega), d)=0$ for $d>d_{H}(\mathcal{A}(\omega))$ and $\mu_{H}(\mathcal{A}(\omega), d)=\infty$ for $d<d_{H}(\mathcal{A}(\omega))$. $d_{H}(\mathcal{A})$ is called the Hausdorff dimension of the Hausdorff dimension of $\mathcal{A}(\omega)$.

For a finite dimensional subspace $F$ of a Banach space $X$, denote by $B^F_r(x)$ the ball in $F$ of center $x$ and radius $r$, that is $B^F_r(x)=\{y \in F |\|y-x\|\leq r\}$. It is proved in \cite{30} that the following covering lemma of balls in finite dimensional Banach spaces is true.
\begin{lem}\label{lem3.1}
For every finite dimensional subspace $F$ of a Banach space $X$, we have
 \begin{equation}\label{3.1}
N\left(r_1, B_{r_2}^F\right)\leq m 2^m\left(1+\frac{r_2}{r_1}\right)^m,
\end{equation}
for all $r_1>0, r_2>0$, where $m=\operatorname{dim} F$ and $N\left(r_1, B_{r_2}^F\right)$ is the minimum number of balls needed to cover $B_{r_2}^F$ by the ball of radius $r_1$ calculated in the Banach space $X$.
\end{lem}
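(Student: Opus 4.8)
The plan is to prove the estimate by a volume--packing argument carried out inside the finite dimensional normed space $(F,\|\cdot\|)$, where $\|\cdot\|$ denotes the restriction to $F$ of the norm of $X$. The only structural facts I will need are that $F$ carries a translation invariant measure under which balls scale correctly, and that a maximal separated set is automatically a covering, so the combinatorics reduces to a single volume comparison.

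First I would fix a linear isomorphism $T\colon F\to\mathbb{R}^m$ and set $\mathrm{vol}(S)=\lambda_m(T(S))$ for Borel sets $S\subset F$, where $\lambda_m$ is Lebesgue measure. Changing $T$ multiplies $\mathrm{vol}$ by a single positive constant, so all \emph{ratios} of volumes are intrinsic to $F$, which is all that is used below. The ball $B_1^F:=B_1^F(0)$ is bounded, convex, symmetric and has nonempty interior, hence $0<\mathrm{vol}(B_1^F)<\infty$; moreover $B_r^F(x)=x+rB_1^F$, so by translation invariance and linearity of $T$ one gets the dilation identity $\mathrm{vol}(B_r^F(x))=r^m\,\mathrm{vol}(B_1^F)$ for all $x\in F$ and $r>0$.

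Next I would select a maximal subset $\{x_1,\dots,x_P\}\subset B_{r_2}^F$ that is $r_1$-separated, i.e.\ $\|x_i-x_j\|\ge r_1$ for $i\ne j$. By maximality every $y\in B_{r_2}^F$ lies within distance $r_1$ of some $x_i$, so the $X$-balls $\mathcal{B}_X(x_i,r_1)\supseteq B_{r_1}^F(x_i)$ cover $B_{r_2}^F$; since $N(r_1,B_{r_2}^F)$ is the minimum over all coverings by balls of $X$, exhibiting this particular covering with centers in $F$ already yields $N(r_1,B_{r_2}^F)\le P$. On the other hand the half-radius balls $B_{r_1/2}^F(x_i)$ are pairwise disjoint, because their centers are at distance $\ge r_1$, and they are all contained in $B_{r_2+r_1/2}^F$. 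Comparing volumes gives
\begin{equation*}
P\,\Big(\frac{r_1}{2}\Big)^m \mathrm{vol}(B_1^F)\;\le\;\Big(r_2+\frac{r_1}{2}\Big)^m\mathrm{vol}(B_1^F),
\end{equation*}
whence $P\le\big(1+2r_2/r_1\big)^m\le 2^m\big(1+r_2/r_1\big)^m\le m\,2^m\big(1+r_2/r_1\big)^m$, which is the asserted inequality, in fact with room to spare.

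The point that needs the most care is the interplay between balls of $X$ and balls of $F$ in the very definition of $N(r_1,B_{r_2}^F)$, since the covering balls in the statement are balls of $X$ and may a priori be centered off $F$. This is harmless for an upper bound: one only has to \emph{produce} a cover, and the cover constructed above uses centers in $F$, so the minimal number can only be smaller. I expect the genuinely delicate step to be the clean justification that $\mathrm{vol}$ is well defined up to a global scale and that the dilation identity $\mathrm{vol}(B_r^F)=r^m\mathrm{vol}(B_1^F)$ holds for the \emph{arbitrary} restricted norm on $F$, rather than the elementary counting. If one prefers to match the exact constant of \cite{30}, the volume comparison may be replaced by the equivalent packing estimate, but the argument above already establishes the stated bound.
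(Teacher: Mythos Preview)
Your argument is correct. The maximal $r_1$-separated set in $B_{r_2}^F$ automatically furnishes a cover by $X$-balls of radius $r_1$ centered in $F$, and the disjoint half-radius balls fit into $B_{r_2+r_1/2}^F$; the volume comparison then gives $P\le(1+2r_2/r_1)^m\le 2^m(1+r_2/r_1)^m\le m\,2^m(1+r_2/r_1)^m$. The only minor remark is that with closed balls and the separation $\|x_i-x_j\|\ge r_1$ the half-balls may touch on a set of measure zero, but this does not affect the volume inequality; your handling of the $X$-ball versus $F$-ball distinction is also correct, since for an upper bound it suffices to exhibit a cover with centers in $F$.

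As for comparison with the paper: the paper does not give a proof of this lemma at all. It merely quotes the statement from Ma\~n\'e \cite{30} and uses the bound as a black box in the proof of Theorem~\ref{thm3.1}. Your volume--packing argument is exactly the classical proof behind such covering-number estimates (and is in fact sharper than the stated bound, the extra factor $m$ being unnecessary), so there is no discrepancy to report: you have supplied the omitted proof rather than taken a different route.
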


We can now establish the upper bound for the Hausdorff dimension of the random attractors for the random dynamical system  $\Phi(t,\omega, \phi)$ in the Banach space $X$ under Hypothesis A1.
\begin{thm}\label{thm3.1} Assume Hypothesis A1 holds and there exists $0<\alpha<2$ such that
 \begin{equation}\label{3.4a}
(\alpha  M  +2 K  +\frac{2KML_f}{\varrho_1-\varrho_m} +\frac{2KM}{\sqrt{2(\varrho_1-\varrho_m)}})e^{(ML_f+\varrho_1+2 \mathbf{E}(R(\theta_t\omega))+2\mathbf{E}(R^2(\theta_t\omega)))t_0}<1.
\end{equation}
Then, the Hausdorff dimension of the global attractor $\mathcal{A}(\omega)$ of \eqref{1} satisfies
 \begin{equation}\label{3.4}
d<\frac{-\ln k_m-k_m\ln(2+\frac{4}{\alpha})}{\ln (\alpha  M  +2 K  +\frac{2KML_f}{\varrho_1-\varrho_m} +\frac{2KM}{\sqrt{2(\varrho_1-\varrho_m)}})+(ML_f+\varrho_1+2 \mathbf{E}(R)+2\mathbf{E}(R^2))t_0},
\end{equation}
where $k_m$ is the dimension of $PX$ defined by \eqref{5.8b}, $K, M, \varrho_m$ and $\rho_1$ being defined by \eqref{5.8a} and \eqref{2.14} respectively and $R(\theta_t\omega)$ is defined in Theorem \ref{thm2.1}.
\end{thm}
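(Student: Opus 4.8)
The plan is to convert the two squeezing estimates of Theorem~\ref{thm2.1} into a single covering statement for the time-$t_0$ map $\Phi(t_0,\omega,\cdot)$, to iterate that statement along the orbit of the measure–preserving shift $\theta_{t_0}$, and finally to balance the number of covering balls against their shrinking radii by means of the ergodic theorem.

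\emph{Step 1: a one–step covering.} Fix $t_0>0$ and abbreviate by $a(\omega)=Me^{(ML_f+\varrho_1)t_0+\int_0^{t_0}R(\theta_s\omega)\,ds}$ the expansion factor in \eqref{2.19} and by $b(\omega)$ the bracketed coefficient of $\|\varphi-\psi\|$ in \eqref{2.20}. For $\varphi,\psi\in\mathcal A(\omega)$ with $\|\varphi-\psi\|\le r$, the splitting $w_{t_0}=Pw_{t_0}+(I-P)w_{t_0}$ together with \eqref{2.19}–\eqref{2.20} confines $\Phi(t_0,\omega,\varphi)$ to within distance $b(\omega)r$, measured in the $(I-P)$–direction, of the $k_m$–dimensional ball $\Phi(t_0,\omega,\psi)+B^{PX}_{a(\omega)r}$. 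Covering this finite–dimensional ball through Lemma~\ref{lem3.1} with $r_2=a(\omega)r$ and the choice $r_1=\tfrac{\alpha}{2}a(\omega)r$ produces $k_m2^{k_m}(1+\tfrac{2}{\alpha})^{k_m}=k_m(2+\tfrac{4}{\alpha})^{k_m}=:N$ balls, and thickening each of them by $b(\omega)r$ in the complementary direction shows that $\Phi(t_0,\omega,\cdot)$ sends $\mathcal B_X(\varphi,r)\cap\mathcal A(\omega)$ into a union of $N$ balls of radius $\kappa(\omega)r$, where $\kappa(\omega)=\tfrac{\alpha}{2}a(\omega)+b(\omega)$. Already the constant $N$ reproduces the numerator $-\ln k_m-k_m\ln(2+\tfrac{4}{\alpha})$ of \eqref{3.4}.

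\emph{Step 2: iteration and invariance.} Because $\Phi(t_0,\omega,\mathcal A(\omega))=\mathcal A(\theta_{t_0}\omega)$ and $\Phi$ is a cocycle, I would start from a finite cover of the compact set $\mathcal A(\omega)$ by $N_0$ balls of radius $r_0$ and apply Step~1 repeatedly; after $n$ steps $\mathcal A(\theta_{nt_0}\omega)$ is covered by $N_0N^n$ balls of radius $r_0\prod_{j=0}^{n-1}\kappa(\theta_{jt_0}\omega)$, whence
\begin{equation*}
\mu_H\big(\mathcal A(\theta_{nt_0}\omega),d,\varepsilon_n\big)\le N_0\,N^n r_0^{\,d}\prod_{j=0}^{n-1}\kappa(\theta_{jt_0}\omega)^d,\qquad \varepsilon_n=r_0\prod_{j=0}^{n-1}\kappa(\theta_{jt_0}\omega).
\end{equation*}

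\emph{Step 3: ergodic averaging and conclusion.} The only randomness in $\kappa(\theta_{jt_0}\omega)$ sits in the integrals $\int_0^{t_0}R(\theta_{s+jt_0}\omega)\,ds$ and $\int_0^{t_0}R^2(\theta_{s+jt_0}\omega)\,ds$, whose blocks telescope over $j$ into $\int_0^{nt_0}R(\theta_s\omega)\,ds$ and $\int_0^{nt_0}R^2(\theta_s\omega)\,ds$. Majorising each of the four exponentials constituting $\kappa$ by a single dominating exponential and invoking Birkhoff's ergodic theorem for $\theta_{t_0}$ yields, for $P$–a.e. $\omega$,
\begin{equation*}
\limsup_{n\to\infty}\frac1n\sum_{j=0}^{n-1}\log\kappa(\theta_{jt_0}\omega)\ \le\ \log\Big(\alpha M+2K+\tfrac{2KML_f}{\varrho_1-\varrho_m}+\tfrac{2KM}{\sqrt{2(\varrho_1-\varrho_m)}}\Big)+\big(ML_f+\varrho_1+2\mathbf E(R)+2\mathbf E(R^2)\big)t_0=:\Lambda,
\end{equation*}
the quantity whose negativity is exactly assumption \eqref{3.4a}. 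Consequently $\varepsilon_n\to0$, and taking logarithms in Step~2 gives $\limsup_n\tfrac1n\log\mu_H(\mathcal A(\theta_{nt_0}\omega),d,\varepsilon_n)\le\log N+d\,\Lambda$. Since $\Lambda<0$, this upper bound is negative as soon as $d>-\log N/\Lambda$, that is, precisely when $d$ exceeds the right–hand side of \eqref{3.4}; for such $d$ the Hausdorff measure vanishes along the orbit, and the $\theta$–invariance, hence a.s. constancy, of $d_H(\mathcal A(\cdot))$ promotes this to the bound \eqref{3.4} for $P$–a.e. $\omega$.

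\emph{Main obstacle.} The decisive difficulty is Step~3: $\kappa(\omega)$ is a \emph{sum} of exponentials with random exponents, so $\log\kappa$ does not decompose and the product $\prod_j\kappa(\theta_{jt_0}\omega)$ cannot be telescoped directly. I must therefore first dominate $\kappa$ by one random exponential — which is where the factor $2$ and the passage from the per–step integrals of $R,R^2$ to the expectations $\mathbf E(R),\mathbf E(R^2)$ in \eqref{3.4a}–\eqref{3.4} originate — then verify that this dominating exponent is integrable so that Birkhoff applies, and confirm that $\kappa(\theta_{jt_0}\omega)<1$ often enough for the radii to genuinely shrink. Ensuring that the final dimension bound is independent of $\omega$ is the last point relying on the ergodicity of $\theta_t$.
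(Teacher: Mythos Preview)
Your proposal is correct and follows essentially the same route as the paper: a one-step covering obtained by applying Lemma~\ref{lem3.1} to the $P$-image with ratio $2/\alpha$ and thickening by the $(I-P)$-bound of Theorem~\ref{thm2.1}, recursive application along the cocycle using the invariance $\Phi(t_0,\omega,\mathcal A(\omega))=\mathcal A(\theta_{t_0}\omega)$, telescoping of the integrals of $R$ and $R^2$, and the ergodic theorem to replace time averages by $2\mathbf E(R)+2\mathbf E(R^2)$. The only cosmetic differences are that the paper dominates the sum of exponentials by the single factor $\eta\,C(\theta_{t_0}\omega)$ \emph{before} iterating (your ``Main obstacle'' is exactly inequality~(3.14)), and concludes via the vanishing of $\mu_H(\mathcal A(\theta_{kt_0}\omega),d,\cdot)$ rather than via the a.s.\ constancy of $d_H(\mathcal A(\cdot))$; both closures are equivalent under the ergodicity of $\theta$.
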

\begin{proof}
Since $\mathcal{A}(\omega)$ is a compact subset of $X$, for any $0<\varepsilon<1$, there exist $r_1, \ldots, r_N$ in $(0, \varepsilon]$ and $\tilde{u}_1, \ldots, \tilde{u}_N$ in $X$ such that
 \begin{equation}\label{3.5}
\begin{gathered}
\mathcal{A}(\omega)\subset \bigcup_{i=1}^N B\left(\tilde{u}_i, r_i\right),
\end{gathered}
\end{equation}
where $B(\tilde{u}_i, r_i)$ represents the ball in $X$ of center $\tilde{u}_i$ and radius $r_i$. Without loss of generality, we can assume that for any $i$
 \begin{equation}\label{3.6}
\begin{gathered}
B\left(\tilde{u}_i, r_i\right) \cap \mathcal{A}(\omega) \neq \emptyset,
\end{gathered}
\end{equation}
otherwise, it can be deleted from the sequence $\tilde{u}_1, \ldots, \tilde{u}_N$. Therefore, we can choose  $u_i, i=1,2, \cdots, N$ such that
 \begin{equation}\label{3.7}
\begin{gathered}
u_i \in B\left(\tilde{u}_i, r_i\right) \cap \mathcal{A}(\omega),
\end{gathered}
\end{equation}
and
 \begin{equation}\label{3.8}
\begin{gathered}
\mathcal{A}(\omega) \subset \bigcup_{i=1}^N\left(B\left(u_i, 2 r_i\right) \cap \mathcal{A}(\omega)\right).
\end{gathered}
\end{equation}
It follows from \eqref{2.19} and \eqref{2.20} that for any  $u\in B\left(u_i, 2 r_i\right) \cap \mathcal{A}(\omega)$, we have
 \begin{equation}\label{3.9}
\begin{aligned}
 \|P[\Phi(t_0,\omega,u)-\Phi(t_0,\omega, u_i)]\|
\leq & 2M e^{(ML_f+\varrho_1)t_0+ \int_{0}^{t_0} M R(\theta_s\omega) d s} \|u-u_i\|.
\end{aligned}
\end{equation}
and
 \begin{equation}\label{3.10}
\begin{aligned}
\|(I-P)[[\Phi(t_0,\omega,u)-\Phi(t_0,\omega, u_i)]]\| \leq & 2[Ke^{ \varrho_m t}+\frac{KM}{\sqrt{2(\varrho_1-\varrho_m)}}e^{[(ML_f+\varrho_1)t+\int_{0}^t MR(\theta_s\omega)ds+\int_{0}^t  R^2(\theta_s\omega)ds]}\\
&+\frac{KML_fe^{[(ML_f+\varrho_1)t+\int_{0}^t MR(\theta_s\omega)ds]}}{\varrho_1-\varrho_m}] \|u-u_i\|.
\end{aligned}
\end{equation}

By Lemma \ref{lem3.1}, for any $\alpha>0$, we can find $y_i^1, \ldots, y_i^{n_i}$ such that
 \begin{equation}\label{3.11}
\begin{gathered}
B_{P X}\left(P\Phi(t_0,\omega, u_i), 2 M e^{(ML_f+\varrho_1)t_0+ \int_{0}^{t_0} M R(\theta_s\omega) d s} r_i\right) \subset \bigcup_{j=1}^{n_i} B_{PX}\left(y_i^j, \alpha M e^{(ML_f+\varrho_1)t_0+ \int_{0}^{t_0} M R(\theta_s\omega) d s} r_i\right)
\end{gathered}
\end{equation}
with
 \begin{equation}\label{3.12}
\begin{gathered}
n_i \leq k_m 2^{k_m} \left(1+\frac{2}{\alpha}\right)^{k_m},
\end{gathered}
\end{equation}
where $k_m$ is the dimension of $P X$ and we have denoted by $B_{PX}(y, r)$ the ball in $PX$ of radius $r$ and center $y$.

Set
 \begin{equation}\label{3.13}
\begin{gathered}
u_i^j=y_i^j+(I-P) \Phi(t_0,\omega, u_i)
\end{gathered}
\end{equation}
for $i=1, \ldots, N, j=1, \ldots, n_i$. Then, for any $u\in B\left(u_i, 2 r_i\right) \cap \mathcal{A}(\omega)$,  there exists a $j$ such that
 \begin{equation}\label{3.14}
\begin{aligned}
\left\|\Phi(t_0,\omega,u)-u_i^j\right\|
& \leq\left\|P\Phi(t_0,\omega,u)-y_i^j\right\|+\left\|(I-P) \Phi(t_0,\omega,u)-(I-P) \Phi(t_0,\omega, u_i)\right\| \\
& \leq (\alpha  M e^{(ML_f+\varrho_1)t_0+ \int_{0}^{t_0}M  R(\theta_s\omega) d s}+2(Ke^{ \varrho_m t_0}+\frac{KML_fe^{[(ML_f+\varrho_1)t_0+\int_{0}^t MR(\theta_s\omega)ds]}}{\varrho_1-\varrho_m})\\
&+\frac{2KM}{\sqrt{2(\varrho_1-\varrho_m)}}e^{[(ML_f+\varrho_1)t_0+\int_{0}^{t_0}M R(\theta_s\omega)ds+\int_{0}^{t_0}  R^2(\theta_s\omega)ds]} ) r_i \\
& \leq (\alpha  M  +2 K  +\frac{2KML_f}{\varrho_1-\varrho_m} +\frac{2KM}{\sqrt{2(\varrho_1-\varrho_m)}}) e^{[(ML_f+\varrho_1)t_0+\int_{0}^{t_0}M R(\theta_s\omega)ds+\int_{0}^{t_0}  R^2(\theta_s\omega)ds]} ) r_i
\end{aligned}
\end{equation}
with
 \begin{equation}\label{3.16}
n_i \leq k_m(2+\frac{4}{\alpha} )^{k_m}
\end{equation}
Denote by $\eta=\alpha  M  +2 K  +\frac{2KML_f}{\varrho_1-\varrho_m} +\frac{2KM}{\sqrt{2(\varrho_1-\varrho_m)}}$ and $C(\theta_{t_0}\omega)=e^{[(ML_f+\varrho_1)t_0+\int_{0}^{t_0}M R(\theta_s\omega)ds+\int_{0}^{t_0}  R^2(\theta_s\omega)ds]}$, then we have
 \begin{equation}\label{3.20}
\Phi(t_0,\omega, B\left(u_i, 2 r_i\right)) \cap \mathcal{A}(\theta_{t_0}\omega)  \subset \bigcup_{j=1}^{n_i} B\left(u_i^j, \eta C(\theta_{t_0}\omega) r_i\right).
\end{equation}
Thanks to the invariance of $\mathcal{A}(\omega)$, i.e., $\mathcal{A}(\theta_{t_0}\omega)=\Phi(t_0,\omega, \mathcal{A}(\omega))$, we have
 \begin{equation}\label{3.21}
\mathcal{A}(\theta_{t_0}\omega) \subset \bigcup_{i=1}^N \bigcup_{j=1}^{n_i} B\left(u_i^j, \eta C(\theta_{t_0}\omega) r_i\right) .
\end{equation}
This implies that, for any $d \geq 0$,
 \begin{equation}\label{3.22}
\begin{aligned}
\mu_H\left(\mathcal{A}(\theta_{t_0}\omega), d, \eta C(\theta_{t_0}\omega)\varepsilon\right)
\leq \sum_{i=1}^N \sum_{j=1}^{n_i} \eta^{d}r_i^d \leq k_m(2+\frac{4}{\alpha})^{k_m} \eta^d C^d(\theta_{t_0}\omega)  \sum_{i=1}^N r_i^d,
\end{aligned}
\end{equation}
we deduce, by taking the infimum over all the coverings of $\mathcal{A}(\omega)$ by balls of radii less than $\varepsilon$,
 \begin{equation}\label{3.23}
\begin{aligned}
\mu_H\left(\mathcal{A}(\theta_{t_0}\omega), d, \eta\varepsilon\right)\leq k_m(2+\frac{4}{\alpha})^{k_m} \eta^{d} C^d(\theta_{t_0}\omega)\mu_H(\mathcal{A}(\omega), d, \varepsilon).
\end{aligned}
\end{equation}
Applying the formula recursively for $k$ times and by adopting the fact $\int_{0}^{(k-1)t_0}[R(\theta_r\omega)+ R^2(\theta_s\omega)]dr+\int_{(k-1) t_0}^{k t_0}[R(\theta_r\omega)+ R^2(\theta_s\omega)] ds=\int_{0}^{k t_0}[R(\theta_r\omega)+ R^2(\theta_s\omega)] ds$, we have
 \begin{equation}\label{3.24}
\begin{aligned}
\mu_H\left(\mathcal{A}(\theta_{kt_0}\omega), d, \tilde{C}(\omega)(\eta\varepsilon)^k\right)\leq [k_m(2+\frac{4}{\alpha})^{k_m} \eta^{d}]^k\tilde{C}^d(\omega)\mu_H(\mathcal{A}(\omega), d, \varepsilon),
\end{aligned}
\end{equation}
where $\tilde{C}(\omega)=e^{[(ML_f+\varrho_1)kt_0+\int_{0}^{kt_0}M R(\theta_s\omega)ds+\int_{0}^{kt_0}  R^2(\theta_s\omega)ds]}$.
Thanks to the  ergodicity of $\theta_t$, $t\in \mathbb{R}$, for almost all $\omega\in \Omega$, we have
 \begin{equation}\label{3.25}
\begin{aligned}
\frac{1}{t}[\int_{0}^{t} R(\theta_s\omega)ds+\int_{0}^{t}  R^2(\theta_s\omega)ds]\rightarrow \mathbf{E}(R(\theta_t\omega) )+\mathbf{E}(R^2(\theta_t\omega))
\end{aligned}
\end{equation}
when $t\rightarrow \infty$.
Therefore, there exist $k_0(\omega)$ such that for all $k\geq k_0(\omega)$, we have
\begin{equation}\label{3.26}
\begin{aligned}
 \int_{0}^{kt_0} R(\theta_s\omega)ds+\int_{0}^{kt_0}  R^2(\theta_s\omega)ds]\leq 2 (\mathbf{E}(R(\theta_t\omega) )+\mathbf{E}(R^2(\theta_t\omega)))kt_0.
\end{aligned}
\end{equation}
implying that
\begin{equation}\label{3.27}
\begin{aligned}
\tilde{C}(\omega)\leq e^{(ML_f+\varrho_1+2 \mathbf{E}(R(\theta_t\omega) )+2\mathbf{E}(R^2(\theta_t\omega)))kt_0}
\end{aligned}
\end{equation}
Incorporating \eqref{3.27} into \eqref{3.24}  gives
 \begin{equation}\label{3.28}
\begin{aligned}
\mu_H\left(\mathcal{A}(\theta_{kt_0}\omega), d, \tilde{C}(\omega)(\eta\varepsilon)^k\right)\leq [k_m(2+\frac{4}{\alpha})^{k_m} (\eta e^{(ML_f+\varrho_1+2 \mathbf{E}(R(\theta_t\omega) )+2\mathbf{E}(R^2(\theta_t\omega)))t_0})^{d}]^k \mu_H(\mathcal{A}(\omega), d, \varepsilon),
\end{aligned}
\end{equation}
Hence, if
 \begin{equation}\label{3.25}
d<\frac{-\ln k_m-k_m\ln(2+\frac{4}{\alpha})}{\ln (\alpha  M  +2 K  +\frac{2KML_f}{\varrho_1-\varrho_m} +\frac{2KM}{\sqrt{2(\varrho_1-\varrho_m)}})+(ML_f+\varrho_1+2 \mathbf{E}(R(\theta_t\omega) )+2\mathbf{E}(R^2(\theta_t\omega)))t_0},
\end{equation}
then
 \begin{equation}\label{3.31}
k_m(2+\frac{4}{\alpha})^{k_m}(\eta e^{(ML_f+\varrho_1+2 \mathbf{E}(R(\theta_t\omega) )+2\mathbf{E}(R^2(\theta_t\omega)))t_0})^{d}<1.
\end{equation}
Thus, by taking $k \rightarrow \infty$, we have $(\eta\varepsilon)^k\rightarrow 0$
and \eqref{3.24} leads to
 \begin{equation}\label{3.32}
\mu_H(\mathcal{A}(\theta_{kt_0}\omega), d, (\eta\varepsilon)^k\tilde{C}^d(\omega)) \rightarrow 0.
\end{equation}
By the ergodicity of $\theta_t$, $t\in \mathbb{R}$, we have
 \begin{equation}\label{3.33}
\mu_H(\mathcal{A}(\omega), d,  \varepsilon) \rightarrow 0.
\end{equation}
for almost all $\omega\in \Omega$. This completes the proof.
\end{proof}

\begin{rem}\label{rem2.1}
By \eqref{3.4}, we can see  estimation of $d_H$ depends on the parameter $\alpha$. If we  take $\alpha\uparrow 2$ and assume   $2 M  +2 K  +\frac{2KML_f}{\varrho_1-\varrho_m} +\frac{2KM}{\sqrt{2(\varrho_1-\varrho_m)}})e^{(ML_f+\varrho_1+2 \mathbf{E}(R(\theta_t\omega))+2\mathbf{E}(R^2(\theta_t\omega)))t_0}<1$, then for all $\alpha\in (0,2)$, we have
$\alpha  M  +2 K  +\frac{2KML_f}{\varrho_1-\varrho_m} +\frac{2KM}{\sqrt{2(\varrho_1-\varrho_m)}})e^{(ML_f+\varrho_1+2 \mathbf{E}(R(\theta_t\omega))+2\mathbf{E}(R^2(\theta_t\omega)))t_0}<1$  and hence we obtain the estimation
 \begin{equation}\label{3.4w}
d_{H}\leq \frac{-\ln\Lambda-\Lambda\ln 4}{\ln (2  M  +2 K  +\frac{2KML_f}{\varrho_1-\varrho_m} +\frac{2KM}{\sqrt{2(\varrho_1-\varrho_m)}})+(ML_f+\varrho_1+2 \mathbf{E}(R(\theta_t\omega) )+2\mathbf{E}(R^2(\theta_t\omega)))t_0},
\end{equation}
which is independent of $\alpha$.
\end{rem}

\section{Conclusions}
In this paper, we give an upper bound  of Hausdorff  dimension  of random attractors   for a  stochastic delayed
parabolic equation in Banach spaces that depends only on the inner characteristics of the equation, while not relating to the compact embedding of the phase space to another Banach space as the existing works did. Similar procedures can also be employed to study the fractal dimension. In \cite{KY}, the authors proposed the Lyapunov dimension and gave the famous Kplan-York formula, which was proved to be correct for Navis-Stokes equation  in Hilbert space in \cite{CF}. In \cite{CFT}, Constantin, Foias and Temam also showed the Lyapunov dimension can control the Hausdorff  dimension and gave better estimation of Hausdorff  dimension than the squeeze method, i.e., the method we used here. Furthermore, \cite{F} showed the dimension of chaotic attractor of delayed Mackey-Glass equation satisfies the Kplan-York formula by numerical simulations and Ledrappier  and Young \cite{LY} showed the Kplan-York formula was mathematically correct for random semiflow generated by stochastic ordinary differential equation, which heuristically inspire us to believe the Kplan-York formula also holds for random delay differential equations in Banach spaces. Nevertheless, the rigorous theoretical proof  are  not established even for the deterministic case maybe due to the lack of exterior product and tensor product of the Hilbert space geometry structure, which will be studied in the near future.

\noindent{\bf Acknowledgement.}
This work was jointly supported by the National Natural Science Foundation of China grant 12201379, the Scientific Research Fund of Hunan Provincial Education Department (23C0013), China Scholarship Council(202008430247). \\
The research of T. Caraballo has been partially supported by Spanish Ministerio de Ciencia e
Innovaci\'{o}n (MCI), Agencia Estatal de Investigaci\'{o}n (AEI), Fondo Europeo de
Desarrollo Regional (FEDER) under the project PID2021-122991NB-C21.

\small

\end{document}